\theoremstyle{plain}
\newtheorem{theorem}{Theorem}
\newtheorem{lemma}[theorem]{Lemma}
\newtheorem{corollary}[theorem]{Corollary}
\theoremstyle{definition}
\newtheorem{remark}[subsection]{Remark}
\newtheorem{nothing*}[subsection]{}
\theoremstyle{remark}
\newcommand{\rien}[1]{}
\renewcommand{\epsilon}{\varepsilon}
\renewcommand{\phi}{\varphi}
\renewcommand{\emptyset}{\varnothing}
\title{On the geometry of  simply connected wandering domains}
\author{Luka Boc Thaler}
\begin{document}
\thanks{The author was supported by the research program P1-0291 from ARRS, Republic of Slovenia}
\address{Faculty of Education, University of Ljubljana, Kardeljeva ploščad 16, 1000 Ljubljana, Slovenia}
\address{ Institute of Mathematics, Physics, and Mechanics, Jandranska 19, 1000 Ljubljana, Slovenia.} \email{luka.boc@pef.uni-lj.si}

\begin{abstract}
 We study the geometry of simply connected wandering domains for entire functions and we prove that every bounded connected regular open set, whose closure has a connected complement, is a wandering domain of some entire function. In particular such a domain can be realized as an escaping or an oscillating wandering domain. As a consequence we obtain that every Jordan curve is the boundary of a wandering Fatou component of some entire function.
\end{abstract}
%%\renewcommand{\baselinestretch}{1.07}
%%%%%%%%  TOPMATTER:   %%%%%%%%%%%%%%%%%%%%%%%%%
%
%
\subjclass[2010]{30D05, 37F10, 30D20}
%%\date{April 15, 2013}
%\keywords{Entire functions, Fatou components, wandering domains}
%
%\vfuzz=2pt
%%%%%%%%%%%%%%%%%%%%%%%%%%%%%%%%%%%%%%%%%%%%%%%%%%%%%%%%%%%%%%%%%%%%
%%%%%%%%%%%%%%%%%%%%%%%%%%%%%%%%%%%%%%%%%%%%%%%%%%%%%%%%%%%%%%%%%%%%
%%%%%%%%%%%%%%%%%%%%%%%%%%%%%%%%%%%%%%%%%%%%%%%%%%%%%%%%%%%%%%%%%%%%
%
%\vskip 1cm
%
\maketitle
%%\tableofcontents

\section{Introduction}
A general goal in discrete dynamical systems is to describe qualitatively the possible
dynamical behaviour under iteration of maps satisfying certain conditions, be they
algebraic or analytic. In this paper we consider the dynamical system given by the iterates of an entire function $f:\mathbb{C}\rightarrow \mathbb{C}$. We use $f^n$ to denote the $n$'th iterate of $f$.  There is a natural dichotomy of the complex plane associated to such dynamical system. We say that a point $p\in\mathbb{C}$ belongs to the \emph{Fatou set} $\mathcal{F}_f$ if and only if there exists an open neighbourhood of $p$ on which the sequence of iterates $(f^n)$ forms a normal family. The Julia set $\mathcal{J}_f$ is then defined as the complement of the Fatou set. The connected components of the Fatou set are called the \emph{Fatou components}. We say that a Fatou component $\Omega$ is \emph{pre-periodic} if there are non-negative integers $n\neq m$ such that $f^n(\Omega)\cap f^m(\Omega)\neq\emptyset$. A Fatou component which is not pre-periodic is called a wandering Fatou component or a \emph{wandering domain}.  

 One of the main goals in complex dynamics is to obtain a complete classification of all possible Fatou components for a given class of maps in terms of their dynamics and their geometry.
 
In the class of entire functions we have a complete classification of the pre-periodic Fatou components, see \cite{Mil06, Sch10}, and the recent developments in studies of wandering domains show that we are also getting closer to a complete classification of wandering domains.

By Sullivan's non-wandering theorem \cite{Sul85} we know that polynomials cannot have wandering domains, hence they can only appear  in the class of the entire transcendental functions. 
%In \cite{Bar75} Baker proved that a multiply connected Fatou component of an entire function is necessarily a wandering domain and in \cite{Bar76} he gave the first example of an entire function with a wandering domain, in particular  with a multiply connected wandering domain. 
 The first example of such a domain  was given by Baker \cite{Bar76} who proved that a certain entire function has a multiply connected Fatou component, which is necessarily a wandering domain \cite{Bar75}.  Since then several examples of simply connected wandering domains have been constructed, see \cite{Her84, Bar84, EL87}. In \cite{KS} Kisaka-Shishikura proved that the eventual connectivity of a wandering domain is constant, equal to either $1$, $2$ or $\infty$. Moreover they showed that for every $p \geq  2$ there exists an entire function with a Fatou component of connectivity $p$.

On a wandering  domain  all  limit  functions  must  be  constant  \cite{Fat20}, therefore we can classify wandering domains into the following three classes. Those  wandering domains for which the only limit function is the point at infinity are called \emph{escaping}, while the rest are either \emph{oscillating} (if the point at infinity and some finite point are both limit functions) or \emph{dynamically bounded} (if all limit functions are finite points).  
%
% \emph{escaping} (the orbit converges to infinity), \emph{oscillating} (one subsequence of the orbit tends to infinity and another tends to a finite point $p\in\mathbb{ C}$) or \emph{bounded} (the orbit of the domain is bounded). On a wandering  domain  all  limit  functions  must  be  constant  [Fat20].   Those  for  which  the  only limit function is the point at infinity are called escaping, while the rest are either oscillating (if infinity is a limit function and some other finite value also) or dynamically bounded (if all limit functions are points in the plane).  A major open problem in transcendental dynamics is whether dynamically bounded wandering domains exist at all.
%%

Note that Baker's first wandering domain was of the escaping type. The first example of an oscillating wandering domain was given by Eremenko and Lyubich in \cite{EL87}. 
In the same paper they proved the existence of an entire function $f$ having a wandering domain $\Omega$, so that $f^n|_{\Omega}$ is univalent for all $n\geq 1$. They were also the first who used an approximation theory for the construction of entire functions in complex dynamics. A major open problem in transcendental dynamics is whether dynamically bounded wandering domains exist at all.

A complete description of the interior dynamical behaviour in a multiply connected wandering domain was given in \cite{BRS13}. In \cite{BEFRS19} the authors classified simply connected wandering domains in terms of the hyperbolic distance between orbits of points and in terms of convergence to the boundary. They show that there are in total nine types of  simply connected wandering domains all of which can be realized as an escaping wandering domain. Recently, in \cite{ERS20} the authors have shown that only six out of these nine types can be realized by an oscillating wandering domain. Let us just mention that there are several important works proving the existence of wandering domains for entire functions in the Eremenko-Lyubich class that consists of all entire functions with bounded singular set, see \cite{Bis15,FJL19, MPS20}. Whether or not all of the previously discussed types of wandering domains can be realized in this class is presently unknown.

Throughout this paper we will use $\Delta(p,r)$ to denote an open disk of radius $r$ centered at $p$. We will also use $\overline{U}$, to denote the closure of the set $U$, and $\partial U$ to denote the boundary of $U$. By $V\Subset U$ we mean that $V$ is compactly contained in $U$.

The aim of this paper is to study the geometry of simply connected wandering domains of entire functions. In particular we study which bounded simply connected domains can be realized as a wandering domain of an entire function. Recall that an open set $U$ is called \emph{regular} if it coincides with the interior of its closure. The following are our main results.

\begin{theorem}\label{thm:esc} Let $\Omega\subset\mathbb{C}$ be a bounded connected regular open set whose closure has a connected complement. There exists an entire function $f$ for which $\Omega$ is an escaping wandering domain and the iterates $f^n|_\Omega$ are univalent.
\end{theorem}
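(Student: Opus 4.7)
The plan is to realize $f$ as a small perturbation of translations using Arakelian-type approximation on a suitable unbounded closed set. First, I would fix a sequence $(p_n)_{n\geq 0} \subset \mathbb{C}$ with $p_0 = 0$ escaping to infinity sufficiently fast that the translates $K_n := \overline{\Omega} + p_n$ are pairwise disjoint with large geometric separation. Set $K := \bigcup_{n \geq 0} K_n$ and $\Omega_n := \Omega + p_n$. Because $\mathbb{C}\setminus\overline{\Omega}$ is connected and the $p_n$ escape rapidly, $K$ can be arranged to be an Arakelian set (its complement is connected in the sphere and locally connected at $\infty$). On $K$ define $\psi(z) := z + (p_{n+1} - p_n)$ for $z \in K_n$; this map is continuous on $K$ and holomorphic on the interior, so by Arakelian's theorem there is an entire function $f$ with $|f-\psi| < \epsilon_n$ on $K_n$ for any prescribed sequence $\epsilon_n \searrow 0$.

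Next, for $\epsilon_n$ chosen small relative to the distance from $\partial\Omega_n$ to $\partial\Omega_{n+1} - (p_{n+1}-p_n)$, Rouch\'e's theorem applied to $f(z) - w$ on $\partial\Omega_n$ (for $w$ ranging over $\Omega_{n+1}$) shows that $f|_{\Omega_n}$ is a biholomorphism onto $\Omega_{n+1}$. Here the regularity hypothesis $\Omega = \mathrm{int}(\overline{\Omega})$ is used to identify the image of the \emph{open} set $\Omega_n$ under a map close to translation. Iterating, $f^n|_{\Omega}$ is univalent and $f^n(\Omega) \subset \Omega_n$, so $|p_n| \to \infty$ forces $f^n \to \infty$ uniformly on compacta of $\Omega$. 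Hence $\Omega$ lies in a Fatou component $U$ of escaping type on which $f^n$ is univalent.

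The main obstacle is showing the equality $U = \Omega$, i.e.\ that $\partial\Omega \subset \mathcal{J}_f$. The strategy here is to enlarge the Arakelian set by an auxiliary family of compacta $L_n$ sitting in $\mathbb{C}\setminus K_n$ and accumulating densely on $\partial\Omega_n$, on which $\psi$ is redefined so that points of $L_n$ are sent to a prescribed compact attractor (for instance, by mapping them close to a fixed point of $f$ arranged elsewhere). Connectedness of $\mathbb{C}\setminus\overline{\Omega}$ is precisely what allows $K \cup \bigcup_n L_n$ to remain Arakelian, so a single entire function $f$ can simultaneously realize both kinds of behaviour; regularity of $\Omega$ guarantees $\partial\Omega = \partial(\mathbb{C}\setminus\overline{\Omega})$ so that the $L_n$ can be packed arbitrarily close to $\partial\Omega_n$. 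The drastic difference between the dynamics on $\Omega_n$ (escape by translation) and nearby points in $L_n$ (bounded orbits) obstructs normality across $\partial\Omega$, placing it in $\mathcal{J}_f$.

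The hard technical step, and where I expect most of the work to sit, is the fourth paragraph above: balancing the approximation tolerances on $K$ and on the $L_n$ so that (i) univalence and the inclusion $f(\Omega_n) = \Omega_{n+1}$ survive, (ii) the auxiliary orbits starting in $L_n$ really fail to accompany the escape of $\Omega$-orbits, and (iii) the geometry of the $L_n$ can be arranged for an \emph{arbitrary} bounded regular $\Omega$ with connected complementary closure, where $\partial\Omega$ may be extremely irregular. Once these are in place, $\Omega$ is identified as a simply connected, escaping wandering Fatou component on which every iterate of $f$ is univalent, proving the theorem.
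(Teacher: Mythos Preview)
Your overall strategy --- realize $f$ as a perturbation of translations so that $\Omega$-orbits escape, while planting nearby points with bounded orbits (landing in an attractor basin) so that $\partial\Omega$ is forced into the Julia set --- is the same as the paper's. The paper, however, builds $f$ \emph{inductively}: at step $k$ it applies a Runge-with-interpolation theorem on finitely many compact pieces to produce $f_k$, then takes $f=\lim f_k$; the attracting fixed point is pinned exactly by interpolation, and a sequence $(x_n)\subset\mathbb{C}\setminus\overline{\Omega}$ accumulating on $\partial\Omega$ is made into exact preimages of it. Your one-shot Arakelian construction is a reasonable alternative, but it does not let you adjust the target sets after seeing where the orbit actually lands. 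The paper's inductive scheme, with its decreasing neighbourhoods $U_k\searrow\overline{\Omega}$, does precisely this tracking, and that is why its univalence and containment statements go through without the claim you make in your second paragraph.

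That claim is a genuine gap. The Rouch\'e argument cannot yield a bijection $f|_{\Omega_n}:\Omega_n\to\Omega_{n+1}$: you would need $|f-\psi|<|\psi(z)-w|$ on $\partial\Omega_n$ for \emph{every} $w\in\Omega_{n+1}$, but since $\psi(\partial\Omega_n)=\partial\Omega_{n+1}$ the right-hand side equals $\operatorname{dist}(w,\partial\Omega_{n+1})$, which tends to $0$ as $w\to\partial\Omega_{n+1}$. (Indeed, the quantity you call ``the distance from $\partial\Omega_n$ to $\partial\Omega_{n+1}-(p_{n+1}-p_n)$'' is identically zero, since $\partial\Omega_{n+1}-(p_{n+1}-p_n)=\partial\Omega_n$.) A small perturbation of a translation simply does not map $\Omega$ onto an exact translate of $\Omega$, and regularity of $\Omega$ does not rescue this. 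What you actually need is weaker: that $f^n(\overline{\Omega})$ stays inside a fixed neighbourhood of $\overline{\Omega}+p_n$ on which $f$ is still uniformly close to a translation. In your framework this means replacing $K_n=\overline{\Omega}+p_n$ by a slightly thickened compact set and choosing the $\epsilon_n$ summable so the accumulated drift never exceeds the thickening; univalence of $f^n|_\Omega$ then follows from $f$ being close to a univalent map on each thickened piece (via Hurwitz/Cauchy estimates on a compactly contained subset), not from an exact bijection between translates. The paper's nested $U_k$ play exactly this role.
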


\begin{theorem}\label{thm:osc} Let $\Omega\subset\mathbb{C}$ be a  bounded connected regular open set whose closure has a connected complement. There exists an entire function $f$ for which $\Omega$ is an oscillating wandering domain and the iterates $f^n|_\Omega$ are univalent.  
\end{theorem}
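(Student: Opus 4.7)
The plan is to adapt the approach of Theorem \ref{thm:esc} by replacing the escaping target sequence with one whose orbit oscillates, returning infinitely often to a neighborhood of a fixed finite point $p \in \mathbb{C}$. As before, the main tool is an approximation of a prescribed holomorphic "model map" defined on a countable disjoint union of affine copies of $\overline{\Omega}$ by an entire function; the only real change from the escaping case is the combinatorial layout of the target copies.

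Fix $p \in \mathbb{C}$, say $p=0$, and choose affine maps $\varphi_n(z) = a_n z + b_n$ with $\varphi_0$ the identity, and set $\Omega_n = \varphi_n(\Omega)$. We arrange the $\Omega_n$ in excursions indexed by $k \geq 1$: in the $k$-th excursion the $\Omega_n$ begin as a tiny affine copy of $\Omega$ inside $\Delta(p,\varepsilon_k)$, are gradually inflated and translated along a prescribed path until they lie outside $\Delta(0,R_k)$, and are then deflated and returned into $\Delta(p,\varepsilon_{k+1})$ with $\varepsilon_{k+1} \ll \varepsilon_k$. The parameters are chosen so that $\varepsilon_k \to 0$, $R_k \to \infty$, the closed sets $\overline{\Omega_n}$ are pairwise disjoint with positive mutual distances, consecutive scale ratios $a_{n+1}/a_n$ remain uniformly bounded above and below, and the returns near $p$ are laid out along a shrinking spiral so that $S := \bigcup_{n\geq 0} \overline{\Omega_n} \cup \{p\}$ has connected complement in $\overline{\mathbb{C}}$. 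Since each $\overline{\Omega_n}$ inherits connected complement from $\overline{\Omega}$ by affine invariance, and the sets are spatially well-separated, $S$ is an admissible set for Arakelyan-type approximation.

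Next, define the holomorphic model $g : S \setminus \{p\} \to \mathbb{C}$ by $g|_{\Omega_n} = \varphi_{n+1} \circ \varphi_n^{-1}$, the unique affine map sending $\Omega_n$ onto $\Omega_{n+1}$. Because consecutive ratios are bounded, $g$ has uniformly controlled distortion on every $\Omega_n$, and the parameters at each return are tuned so that $g$ extends holomorphically across the accumulation point $p$. Applying the same approximation theorem used in the proof of Theorem \ref{thm:esc}, we obtain an entire function $f$ with $|f-g|$ smaller than any prescribed positive function on $S$. Choosing the tolerance to decay fast enough forces, by the same perturbation arguments as in Theorem \ref{thm:esc}, that $f$ is univalent on a neighbourhood of each $\overline{\Omega_n}$, that $f(\overline{\Omega_n}) \subset \Omega_{n+1}$, that all iterates $f^n|_\Omega$ are univalent, and that $\Omega$ is precisely a Fatou component of $f$. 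Along the peaks of the excursions one gets $f^n|_\Omega \to \infty$, while along the return subsequence $f^n|_\Omega$ converges locally uniformly to the constant $p$; hence both $\infty$ and $p$ are limit functions, and $\Omega$ is an oscillating wandering domain.

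The main obstacle is the geometric design of the sequence $(\Omega_n)$. One must simultaneously force infinite returns to $p$, preserve Arakelyan-admissibility of $S$ in the presence of an interior accumulation point, keep the model $g$ holomorphic with bounded distortion (so that the approximation does not destroy local univalence), and confirm that the finite limit point $p$ really is realised as a limit function of $(f^n|_\Omega)$. Once the copies are correctly placed and the tolerances diagonally matched against the scales $\varepsilon_k$, the remaining analytic bookkeeping — univalence of iterates, identification of the Fatou component of $\Omega$, and the fact that no point outside $\Omega$ joins this component — is formally identical to the escaping case treated in Theorem \ref{thm:esc}.
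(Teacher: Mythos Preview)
Your outline has a genuine gap at precisely the step you dismiss as ``formally identical to the escaping case'': ensuring that $\Omega$ is a full Fatou component rather than a proper subset of one. In the paper's proof of Theorem~\ref{thm:esc} this is achieved by building in an auxiliary attracting fixed point together with a sequence $(x_n)\subset\mathbb{C}\setminus\overline{\Omega}$ that accumulates on all of $\partial\Omega$, each $x_n$ eventually landing on that fixed point; this forces $\partial\Omega\subset\mathcal{J}_f$. Your model contains no such device. Worse, you explicitly require $f(\overline{\Omega_n})\subset\Omega_{n+1}$, which traps the orbit of every point of $\partial\Omega$ inside the controlled chain $(\Omega_n)$ and hence places $\partial\Omega$ in the Fatou set. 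With that requirement the Fatou component containing $\Omega$ is strictly larger than $\Omega$, and the construction collapses.

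There is also a mismatch with the approximation tool. Theorem~\ref{approx}, the result actually invoked in both Theorems~\ref{thm:esc} and~\ref{thm:osc}, applies only to finitely many disjoint compacts; the paper proceeds by an \emph{inductive} sequence of such finite approximations (Lemma~\ref{lem:2}), at each stage interpolating at the points $x_n^j$ and at the attracting fixed point. A single-shot Arakelyan approximation on an infinite set $S$ with an interior accumulation point is a different theorem, and to repair the Fatou-component issue above you would additionally need jet interpolation on a discrete set accumulating on $\partial\Omega$ from the outside, plus an attracting fixed point built into $g$ --- none of which is ``the same approximation theorem used in the proof of Theorem~\ref{thm:esc}''.
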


 As an immediate consequence of these results we get the following corollary.

\begin{corollary} Every Jordan curve is the boundary of a wandering Fatou component of some entire function.
\end{corollary}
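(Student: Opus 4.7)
The plan is to deduce this directly from Theorem~\ref{thm:esc} by taking $\Omega$ to be the bounded complementary component of the Jordan curve. The content of the corollary is essentially a verification that a Jordan domain satisfies the hypotheses of the theorem, so the argument will be short.

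Given a Jordan curve $\gamma\subset\mathbb{C}$, I would first invoke the Jordan curve theorem to split $\mathbb{C}\setminus\gamma$ into its two connected components, the bounded one $\Omega$ and the unbounded one $U$, both of which have $\gamma$ as their common topological boundary. In particular $\Omega$ is a bounded, connected, open set with $\partial\Omega=\gamma$, and $\overline{\Omega}=\Omega\cup\gamma$ has complement equal to $U$, which is connected. This already checks two of the three hypotheses of Theorem~\ref{thm:esc}.

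The only point requiring a brief verification is that $\Omega$ is regular, i.e.\ $\Omega=\operatorname{int}(\overline{\Omega})$. The inclusion $\Omega\subseteq\operatorname{int}(\overline{\Omega})$ is obvious since $\Omega$ is open. For the reverse inclusion, I would observe that any $p\in\gamma$ lies in $\partial U$ as well, so every neighbourhood of $p$ meets $U=\mathbb{C}\setminus\overline{\Omega}$; hence no point of $\gamma$ is interior to $\overline{\Omega}$, and $\operatorname{int}(\overline{\Omega})\subseteq\Omega$. I do not expect any real obstacle here; this is the only step that uses anything nontrivial about Jordan curves.

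With all three hypotheses in hand, Theorem~\ref{thm:esc} produces an entire function $f$ for which $\Omega$ is an (escaping) wandering Fatou component. Its boundary as a Fatou component is $\partial\Omega=\gamma$, which finishes the proof. Note that Theorem~\ref{thm:osc} could be substituted for Theorem~\ref{thm:esc} to realise $\gamma$ as the boundary of an oscillating wandering Fatou component instead.
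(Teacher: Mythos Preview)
Your proposal is correct and matches the paper's approach: the corollary is stated there as an immediate consequence of Theorems~\ref{thm:esc} and~\ref{thm:osc}, with no separate proof given. Your brief verification that a Jordan domain is bounded, connected, regular, and has $\mathbb{C}\setminus\overline{\Omega}$ connected is exactly the implicit content of ``immediate.''
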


%Note that every regular open set whose closure has a connected complement is a simply connected set. The condition that $\Omega$ is a regular open set is not only sufficient but also necessary. Indeed, note that the Fatou components of a disconnected Fatou set are always regular open sets, which follows from the fact that for any open neighbourhood $U$ of a point $z\in \mathcal{J}_f$, the set $\cup_{n=0}^{\infty}f^n(U)$ covers the whole plane with at most one exception.

Note that every regular open set whose closure has a connected complement is a simply connected set. The condition that $\Omega$ is a regular open set is not only sufficient but also necessary. Indeed, let $\Omega$ be a wandering domain of an entire function $f$ and assume that $\Omega$ is not regular. Since $\text{int}(\overline{\Omega})\cap\mathcal{J}_f\neq \emptyset$ it follows that $\cup_{n=0}^{\infty}f^n(\text{int}(\overline{\Omega}))$ covers the whole plane with at most one exception. On the other hand since $\Omega$ is a wandering domain and $f$ is a continuous open map we have $f^n(\text{int}(\overline{\Omega}))\cap f^m(\text{int}(\overline{\Omega}))=\emptyset$ for all $n\neq m$ which brings us to the contradiction.  
%
%  then the Fatou set is disconnected. Assume that $\Omega$ is not regular open set and let $U$ be a Fatou component such that $f(\Omega)\subset U$. Since $f$ is a continuous open map we have $f(\text{int}(\overline{\Omega}))\subset \text{int}(\overline U$) which implies that $f^n(\text{int}(\overline{\Omega}))\subset \text{int}(\overline{U}_n)$ for all $n\geq 0$, where $U_n$ is a Fatou component. Now this brings us to the contradiction  with the fact that $\cup_{n=0}^{\infty}f^n(\text{int}(\overline{\Omega}))$ covers the whole plane with at most one exception, since $\text{int}(\overline{\Omega})$ is a neighbourhood of a point $z\in \mathcal{J}_f$.
%
%
%  which follows from the fact that for any open neighbourhood $U$ of a point $z\in \mathcal{J}_f$, the set $\cup_{n=0}^{\infty}f^n(U)$ covers the whole plane with at most one exception.

% Furthermore we recall that if the Fatou set of an entire function is not connected, then every Fatou component is a regular open set. Indeed, this a simple consequence of fact that for any open neighbourhood $U$ of a point $z\in \mathcal{J}_f$, the set $\cup_{n=0}^{\infty}f^n(U)$ covers the whole plane with at most one exception. Therefore the condition that $\Omega$ is a regular open set is not only sufficient but also necessary.  

 The other two conditions in our theorems, namely that $\Omega$ is bounded and that  $\mathbb{C}\backslash \overline{\Omega}$ is connected, are needed for the application of the following stronger version of the well-known Runge’s Approximation Theorem.  

\begin{theorem}\label{approx} Let $K_1,\ldots,K_n \subset\mathbb{C}$ be pairwise disjoint compact sets whose complements $\mathbb{C}\backslash K_j $ are connected. Let $L_k\subset K_k$ be a finite set of points and $f_k:K_k\rightarrow \mathbb{C}$ a holomorphic map  for every $1\leq k\leq n$.  For every $\epsilon>0$ there exists an entire function $f$ satisfying:
\begin{enumerate}
\item $\|f_k-f\|_{K_k}<\epsilon$
\item  $f(x)=f_k(x)$ for all $x\in L_k$
\item $f'(x)=f_k'(x)$  for all $x\in L_k$
\end{enumerate}
for every $1\leq k\leq n$.
\end{theorem}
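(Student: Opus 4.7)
The strategy is the classical combination of Hermite interpolation with Runge polynomial approximation: fix a polynomial that handles the prescribed values and first derivatives exactly, then add a small entire correction that vanishes to sufficiently high order at the interpolation points so as not to disturb them.

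Set $K=\bigcup_{k=1}^n K_k$ and $L=\bigcup_{k=1}^n L_k$. Since the $K_k$ are pairwise disjoint compact sets and each $f_k$ is holomorphic on some open neighborhood of $K_k$, we may choose pairwise disjoint open neighborhoods $V_k\supset K_k$ on which $f_k$ is holomorphic, and define a holomorphic function $g$ on $V=\bigcup_k V_k$ by $g|_{V_k}=f_k$. A key preliminary point is that $\mathbb{C}\setminus K$ is connected: since each $\mathbb{C}\setminus K_j$ is connected, each $K_j$ is polynomially convex, and a disjoint union of polynomially convex planar compacta remains polynomially convex (as one sees by applying the maximum principle to $(z-z_0)\cdot P(z)$ on any hypothetical bounded component of $\mathbb{C}\setminus K$ containing $z_0$, with $P$ a candidate polynomial approximant of $1/(z-z_0)$). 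Hence classical polynomial Runge approximation is available on $K$.

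Let $p$ be any polynomial satisfying the Hermite conditions $p(a)=g(a)$ and $p'(a)=g'(a)$ for every $a\in L$, and set
\[
P(z)=\prod_{a\in L}(z-a)^2.
\]
Since $g-p$ vanishes to order at least two at every $a\in L$, the quotient $(g-p)/P$ has removable singularities on $V$ and extends to a holomorphic function $\psi$ on $V$. Apply classical Runge to $\psi$: for any $\delta>0$ there is an entire function $u$ with $\|u-\psi\|_K<\delta$. Taking $\delta=\epsilon/(1+\|P\|_K)$ and defining
\[
f:=p+P\cdot u,
\]
we obtain an entire function. For each $a\in L$ the double zero of $P$ gives $P(a)=P'(a)=0$, so $f(a)=p(a)=g(a)$ and $f'(a)=p'(a)=g'(a)$, yielding conditions (2) and (3). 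On $K$ one estimates $|f-g|=|P|\cdot|u-\psi|\le \|P\|_K\cdot\delta<\epsilon$, and since $g|_{K_k}=f_k$, condition (1) follows.

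The only non-immediate part is the connectedness of $\mathbb{C}\setminus K$, which is what upgrades Runge approximation from rational functions with poles in bounded components of $\mathbb{C}\setminus K$ to genuine polynomials (hence entire functions). Once that is in hand, the remainder is the standard Hermite-plus-Runge correction: the polynomial $P$ vanishes to high enough order at $L$ to absorb the approximation error of $u$ without perturbing any of the prescribed interpolation data.
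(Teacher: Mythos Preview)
Your argument is correct and complete, and it takes a genuinely different route from the paper's. The paper proceeds abstractly: it views the interpolation constraints as cutting out an affine subspace $S$ of finite codimension in $\mathcal{O}(U)$, notes that polynomials are dense in the open convex set $V$ of $\epsilon$-approximants by Runge, and then invokes a functional-analytic lemma (stated as Lemma~\ref{dense}, due to Eremenko--Lyubich) asserting that a convex dense subset of $V$ remains dense after intersecting with any finite-codimension affine slice. Your approach is more elementary and fully constructive: you absorb the interpolation data into a single Hermite polynomial $p$, factor out the squared nodal polynomial $P$, and then apply Runge only to the quotient $\psi=(g-p)/P$, so that the correction $P\cdot u$ automatically preserves values and first derivatives at $L$. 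This avoids the abstract density lemma entirely and yields an explicit polynomial $f$; the paper's approach, in turn, generalizes more readily to other linear constraints.

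One small caveat: your parenthetical justification that $\mathbb{C}\setminus K$ is connected is circular as written. Producing a polynomial approximant $P$ of $1/(z-z_0)$ on all of $K$ already presupposes polynomial convexity of $K$; knowing only that each $K_j$ is polynomially convex gives you a different approximant on each piece, and these do not combine directly via the maximum principle. The claim is nonetheless true and standard: for instance, if $B$ were a bounded component of $\mathbb{C}\setminus K$, then $\partial B\subset K_1\cup K_2$ (say $n=2$), and if $\partial B\subset K_1$ then $B$ would be clopen in the connected set $\mathbb{C}\setminus K_1$, a contradiction; so $\partial B$ meets both $K_j$, and one then argues via an exhaustion of $\mathbb{C}\setminus K_1$ by compacts with connected complement, or simply cites the well-known fact that Runge's theorem with polynomial approximants holds on any compact exhaustion of a disjoint union of simply connected domains. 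The paper, incidentally, is equally brief on this point, invoking Runge for polynomials on $\bigcup_j K_j$ without further comment.
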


For the proof of Theorem \ref{approx} we refer the reader to the Appendix. Let us remark that we do not know whether the condition of $\mathbb{C}\backslash\overline{\Omega}$ being connected is also a necessary condition. In fact this problem is closely related to the question about the existence of shielded components (also known as hidden components), see \cite{BF,CR}. 

{\bf Question:} \emph{Is it true that the closure of any bounded simply connected Fatou component, of an entire function, has a connected complement?}

If the answer is positive, then our result describes all possible geometries of bounded simply connected wandering domains.

%Note that there are many examples of open bounded simply connected sets whose closure has a disconnected complement, e.g. the set  $\Delta(-1,1)\backslash\overline{\Delta}(-1/2,1/2)$. 

\medskip

Finally we give a brief sketch of the proof of our theorems. Observe that $\Omega$ is a regular open set if and only if $\partial\Omega=\partial(\mathbb{C}\backslash\overline{\Omega})$, hence $\Omega$ is regular if and only if there exists a sequence of points $(x_n)\in \mathbb{C}\backslash\overline{\Omega}$ which accumulates everywhere on $\partial\Omega$. Using Theorem \ref{approx} we inductively construct a sequence of entire functions that converges uniformly to the entire function $f$ with the following properties. The $f$-iterates of $\Omega$ have uniformly bounded diameters, which implies that $\Omega$ is contained in the Fatou set.  We make sure that $f^n|_{\Omega}$ is univalent for all $n\geq 1$, and that the points $x_n$ are pre-images of an attracting fixed point of $f$. Since these points accumulate everywhere on $\partial\Omega$ it follows that $\Omega$ is a Fatou component. The fact that the orbit of $\Omega$ is escaping/oscillating will follow directly from the construction.

Similar ideas were previously used by the author \cite{BT21}, to prove the existence of bounded wandering domains for holomorphic automorphisms of $\mathbb{C}^n$. In particular the author proved that any bounded regular open set, whose closure is polynomially convex, is a wandering domain of some holomorphic automorphism of $\mathbb{C}^n$. For example any convex domain in $\mathbb{C}^n$ satisfies these conditions. 
 
 \subsection*{Acknowledgments} I would like to express my sincere gratitude to the anonymous referees for their constructive remarks which lead to a much improved presentation. In particular I would like to thank the referee for pointing out that my hypothesis, that $\Omega$ is regular, is not only sufficient, but also necessary.

\section{Proof of Theorem 1}
We start the proof by choosing a sequence $(U_n)_{n\geq0}$ of compact neighbourhoods of $\overline{\Omega}$ that  satisfies:  
\begin{enumerate}[label=(\roman*)]
\item $\mathbb{C}\backslash U_n$ is connected for all $n \geq 0$,
\item  $U_{n+1}\subset \text{int}(U_n)$ for all $n \geq 0$,
\item $\overline{\Omega} =\bigcap_{n\geq 0}U_n$.
%\item $x_0^j\in U_n$  if and only if $j>n\geq 0$.
\end{enumerate}
Note that such a sequence can be obtained by defining $U_n$ as the union of $V_n=\{z\in\mathbb{C}\mid d(z,\overline{\Omega})\leq \frac{1}{n}\}$ and all bounded components of $\mathbb{C}\backslash V_n$.
Next we choose a sequence $(x_n)\in\mathbb{C}\backslash \overline{\Omega}$ that satisfies the following properties:
\begin{enumerate}
\item $x_n\in \text{int}(U_{n-1})\backslash U_n$  for all $n \geq 1$,
\item $\omega((x_n))=\partial\Omega$, i.e. $(x_n)$ accumulates everywhere on the boundary of $\Omega$.
\end{enumerate}

We assume, without loss of generality, that $3\in \Omega \subset U_0\subset \Delta(3,1)$. Note that this can be always achieved with a linear change of coordinates.

 As we have mentioned in the introduction the idea is to construct an entire function $f$ so that the iterates of $\Omega$ will escape every compact set and will have a uniformly bounded diameter. Moreover we will make sure that all iterates of $f$ are univalent on $\Omega$ and that the points $x_n$ are pre-images of an attracting fixed point of $f$. Such function will be obtained as a limit of an inductively constructed sequence of entire functions given by the following lemma.

\begin{lemma}\label{propescape} Let  $(U_n)_{n\geq 0}$ and $(x_n)_{n\geq 1}$ be as above. There exists a sequence $(f_k)_{k\geq 1}$ of entire functions and sequences of points  $(x_n^j)_{n >  j \geq 1 }$,  such that the following properties are satisfied for all $k\geq 1$:

\begin{enumerate}[label=(\alph*)]
\item   $\|f_{k+1}-f_{k}\|_{\overline{\Delta}(0,4k+1)}\leq 2^{-k}$,
\item $f_{k}^n(U_k)\subset \Delta(4n+3,1),$ for all $1\leq n\leq k$,
%\item $f_{k+1}^j(U_{k+2})\subset f_{k}^j(U_{k+1}),$ for all $1\leq j\leq k$,
\item $f_{k}^n|_{U_k}$ is univalent for all $1\leq n\leq k$,
\item $f_k^j(x_n)=x^{j}_n$ for all  $1\leq j< n \leq k$,
\item $f_k^n(x_n)=0$ for all $1\leq n\leq k$,
\item $f_k(0)=0$ and $f'_k(0)=\frac{1}{2}$.
\end{enumerate}
\end{lemma}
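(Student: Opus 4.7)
The plan is to proceed by induction on $k$. For the base case $k=1$, I would apply Theorem~\ref{approx} to three pairwise disjoint compacta with connected complements: $K_1'=U_1$ (possibly thickened slightly inside $\mathrm{int}(U_0)$ so Hurwitz's theorem can be invoked) with target $z\mapsto z+4$ and $L_1'=\emptyset$; $K_2'=\overline{\Delta}(0,\rho)$ with target $z\mapsto z/2$ and $L_2'=\{0\}$; and $K_3'=\overline{\Delta}(x_1,\rho')$ with target the constant $0$ and $L_3'=\{x_1\}$. The set $U_1\subset\Delta(3,1)$ has connected complement by~(i), the other two are closed disks, and $0,x_1\notin U_1$ makes the three compacta disjoint for small $\rho,\rho'$. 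For $\epsilon$ sufficiently small, the resulting $f_1$ satisfies (b), (c), (e), and (f); conditions (a) and (d) are vacuous.

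For the inductive step, assume $f_k$ and $(x_n^j)_{1\le j<n\le k}$ have been constructed. Since $x_{k+1}\in U_k$, property (b) of $f_k$ places $y_{k+1}^j:=f_k^j(x_{k+1})$ in $\Delta(4j+3,1)$ for $1\le j\le k$. I would apply Theorem~\ref{approx} to three pairwise disjoint compacta with connected complements: $K_1=\overline{\Delta}(0,4k+1)$ with target $F_1=f_k$ and $L_1=\{0\}\cup\{x_n^j:1\le n\le k,\,0\le j\le n-1\}\cup\{y_{k+1}^j:0\le j\le k-1\}$ (with convention $x_n^0:=x_n$); $K_2$, a compact neighborhood of $f_k^k(U_{k+1})$ in $\Delta(4k+3,1)\setminus\{y_{k+1}^k\}$ with connected complement, target $F_2(z)=z+4$; and $K_3=\overline{\Delta}(y_{k+1}^k,\rho)$ for small $\rho$, target $F_3\equiv 0$, $L_3=\{y_{k+1}^k\}$. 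Set $x_{k+1}^j:=y_{k+1}^j$ for $1\le j\le k$. Since $F_1=f_k$ already satisfies $F_1(0)=0$, $F_1'(0)=\tfrac12$, $F_1(x_n^j)=x_n^{j+1}$ (with $x_n^n:=0$), and $F_1(y_{k+1}^j)=y_{k+1}^{j+1}$, the exact interpolation in Theorem~\ref{approx} guarantees properties (d), (e), and (f); property (a) holds for $\epsilon\le 2^{-k}$. For (b) and (c), I choose $\epsilon$ small enough that the iterates $f_{k+1}^n(U_{k+1})$ remain close to $f_k^n(U_{k+1})\Subset\Delta(4n+3,1)$ for $n\le k$, that $f_{k+1}^{k+1}(U_{k+1})\subset\Delta(4(k+1)+3,1)$ via the shift approximation on $K_2$, and that univalence of $f_k^n|_{U_k}$ transfers to $f_{k+1}^n|_{U_{k+1}}$ by Hurwitz's theorem, with the $(k+1)$-th step following from univalence of $z\mapsto z+4$ on $K_2$.

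The main obstacle is the construction of $K_2$: a single compact set on which to impose the shift-by-$4$ behavior together with $f_{k+1}(y_{k+1}^k)=0$ would give contradictory interpolation conditions. The univalence of $f_k^k|_{U_k}$ and $x_{k+1}\in U_k\setminus U_{k+1}$ imply $y_{k+1}^k\notin f_k^k(U_{k+1})$, so the two sets lie at positive distance. Moreover, since $U_{k+1}$ has connected complement in $\mathbb{C}$ and $f_k^k|_{U_k}$ is a homeomorphism onto its image, $f_k^k(U_{k+1})$ also has connected complement (by topological invariance of the number of complementary components of a compact planar set, e.g.\ via Alexander duality). Hence $y_{k+1}^k$ can be joined to $\infty$ by a Jordan arc in $\mathbb{C}\setminus f_k^k(U_{k+1})$, and a thin closed tubular neighborhood of $f_k^k(U_{k+1})$ disjoint from this arc, with its bounded complementary components filled in, yields such a $K_2$.
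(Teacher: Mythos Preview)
Your proof is correct and follows essentially the same approach as the paper: the same three-piece Runge data (big disk with target $f_k$, neighborhood of $f_k^k(U_{k+1})$ with target $z\mapsto z+4$, and the singleton/small disk at $x_{k+1}^k$ with target $0$), the same interpolation set, and the same control of (b) and (c) by choosing $\epsilon$ small. The only cosmetic difference is that the paper defines $x_{k+1}^j:=f_k^j(x_{k+1})$ at the end of step $k$ rather than at the start of step $k+1$; your treatment is equivalent. In fact you are more careful than the paper on two points it leaves implicit: you justify that $\mathbb{C}\setminus f_k^k(U_{k+1})$ is connected (the paper merely asserts this), and you explain how to carve out a compact neighborhood $K_2$ with connected complement that still avoids $y_{k+1}^k$.
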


%\begin{remark} Conditions $(e)-(g)$ of the Lemma imply that $x^k_{j}=s_0$ for all $j\geq k\geq 1$.
%\end{remark}

\begin{proof}[Proof of the Lemma:] 
Let us first construct a function $f_1$ that satisfies:
\begin{enumerate}
\item  $\|f_{1}(z)-\frac{1}{2}z\|_{\overline{\Delta}(0,1)}\leq 2^{-1}$,
\item $f_{1}(U_1)\subset \Delta(7,1),$ 
\item $f_{1}|_{U_1}$ is univalent,
\item $f_1(x_1)=0$,
\item $f_1(0)=0$ and $f'_1(0)=\frac{1}{2}$.
\end{enumerate}
Let $K_1=\overline{\Delta}(0,1)$, $K_2=\{x_1\}$ and $K_3=U_1$ be disjoint compact sets and observe that their  complement is connected. Moreover let $h_1(z)=\frac{1}{2}z$, $h_2(z)=0$ and $h_3(z)=z+4$. By Theorem \ref{approx} for every $\epsilon_1>0$ there exists an entire function $f_1$ such that 
\begin{itemize}
\item  $\|f_{1}-h_j\|_{K_j}\leq \epsilon_1$ for all $1\leq j\leq 3$
\item $f_1(x_1)=0$,
\item $f_1(0)=0$ and $f'_1(0)=\frac{1}{2}$.
\end{itemize}
By choosing $\epsilon_1$ sufficiently small we also get $f_{1}(U_1)\subset \Delta(7,1)$ and $f_{1}|_{U_1}$  univalent. Finally we define $x_2^1:=f_1(x_2)$ and observe that $f_1$ satisfies all the conditions of the lemma for $k=1$.
\medskip

Now let us assume that we have already constructed entire functions $f_1,\ldots ,f_k$ and points $x_n^j$, where $1\leq j<n\leq k+1$,  that satisfy all conditions $(a)-(f)$. We proceed with the constructions satisfying the conditions for $k + 1$.
\medskip

By the induction hypothesis we know that $f^k_k|_{U_{k+1}}$ is univalent and $f^k_k(U_{k+1})\subset  \Delta(4k+3,1)$. Moreover  $x_{k+1}^{k}\notin f^k_k(U_{k+1})$ and  $\mathbb{C}\backslash f^k_k(U_{k+1})$ is connected. Let us define compact sets
$$
K_1=\overline{\Delta}(0,4k+1) ,\quad K_2=\{x_{k+1}^{k}\}
$$
and choose a compact set $K_3$ that satisfies 
\begin{itemize}
\item $f^k_k(U_{k+1})\subset \text{int}(K_3)$  
\item $K_3\cap(K_1\cup K_2)=\emptyset$,
\item $\mathbb{C}\backslash K_3$ is connected.
\end{itemize}
Next we define functions
 $$
 h_1(z)=f_k(z),\quad  h_2(z)= 0, \quad h_3(z)=z+4.
 $$

Clearly $K_1$, $K_2$ and $K_3$ satisfy all the conditions of Theorem \ref{approx}, therefore for every $\epsilon_{k+1}>0$ there exists an entire function $f_{k+1}$, such that: 
\begin{enumerate}
\item $\|f_{k+1}-h_j\|_{K_j}\leq \epsilon_{k+1}$ for every $1\leq j\leq 3$, 
\item $f_{k+1}(x_n^j)=f_k(x_n^j)$ for all $0 \leq j < n \leq k$, (here $x_n^0:=x_n$)
\item $f_{k+1}(x_{k+1}^j)=f_k(x_{k+1}^j)$ for all $0 \leq j < k$, (here $x_{k+1}^0:=x_{k+1}$)
\item $f_{k+1}(x_{k+1}^{k})=0$,
\item $f_{k+1}(0)=0$ and $f_{k+1}'(0)=\frac{1}{2}$,

\end{enumerate}

where we can choose  $\epsilon_{k+1}\leq \frac{1}{2^k}$ small enough such that:
\begin{enumerate}[label=(\roman*)]
\item $f_{k+1}^n(U_{k+1})\subset  \Delta(4n+3,1)$, for all $1\leq n\leq k+1$,
%\item $f_{k+1}^j(U_{k+2})\subset f_{k}^j(U_{k+1})$, for all $1\leq j\leq k$,
\item  $f_{k+1}^n|_{U_{k+1}}$ is univalent for all $1\leq n\leq k+1$.
\end{enumerate}
%Note that (ii) follows from the fact that $U_{k+2}\subset \text{int}(U_{k+1})$ and that $\cup^{k-1}_{j=0}f_k^j(U_{k+1})\subset K_1$.
 Indeed, for (i) recall that by the induction hypothesis we have $f_{k}^n(U_{k})\subset  \Delta(4n+3,1)$, for all $1\leq n\leq k$, hence for all small $\epsilon_{k+1}$ we also have  $f_{k+1}^n(U_{k+1})\subset  \Delta(4n+3,1)$, for all $1\leq n\leq k$.  By taking even smaller $\epsilon_{k+1}$ if necessary, we may assume that $f_{k+1}^k(U_{k+1})\subset K_3$, hence $f_{k+1}^{k+1}(U_{k+1})\subset  \Delta(4(k+1)+3,1)$ follows from the fact that $h_3(z)=z+4$ on $K_3$.
 
  For (ii)  recall that by the induction hypothesis $f_k^n|_{U_{k}}$ is univalent for all $1\leq n\leq k$, hence $f_k$ is univalent on $\cup^{k-1}_{n=0}f_k^n(U_{k})\subset K_1$. Since we $U_{k+1}\subset \text{int}(U_{k})$ we can choose $\epsilon_{k+1}$ so small that $f_{k+1}^n|_{U_{k+1}}$ is univalent for all $1\leq n\leq k$. By taking even smaller $\epsilon_{k+1}$ if necessary, we may assume that $f_{k+1}^k(U_{k+1})\subset K_3$ and since $h_3$ is univalent on $K_3$ we can conclude that also $f_{k+1}^{k+1}|_{U_{k+1}}$ is univalent.
\medskip

Finally we define $x_{k+2}^j:=f_{k+1}^j(x_{k+2})$ for all $1\leq j\leq  k+1$. The entire function $f_{k+1}$ now satisfies all the conditions $(a)-(f)$ of the Lemma, hence this completes the inductive step.

\end{proof}

Let us continue with the proof of Theorem 1.  Let $(f_n)$ be a sequence of entire functions as given by the lemma above. The sequence of entire functions $(f_n)$  converges uniformly on compacts to an entire function $f$ satisfying:
\begin{enumerate}
\item $f^n(\Omega)\subset \Delta(4n+3,1),$ for all $n\geq 0$,
\item $f^n|_{\Omega}$ is univalent for all $n\geq 0$
\item $f^n(x_n)=0$ as for all $n\geq 1$,
\item $f(0)=0$ and $f'(0)=\frac{1}{2}$. 

\end{enumerate}

\begin{figure}[t]
\label{fig:escape}
\includegraphics[width=6in]{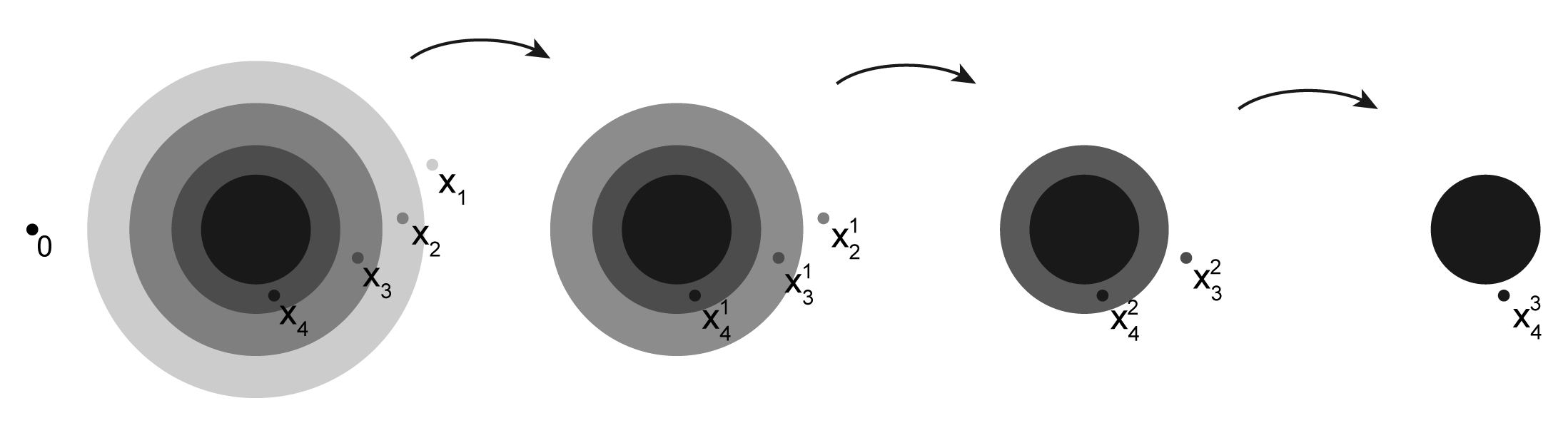}
\caption{The figure shows the first three iterates of $f$ acting on the wandering domain $\Omega$, which lies inside the black disk. There are four points around this disk which are part of the sequence of points that accumulates everywhere on the boundary of $\Omega$. Points $x_1$, $x_{2}^1$, $x_{3}^2$ and  $x_{4}^3$, that belong to the orbit of a point $x_1$, $x_2$, $x_3$ and $x_4$ respectively,  are mapped to the attracting fixed point at the origin. The action of $f$ on the orbit of $\Omega$ is approximately a translation by $4$. } 
\end{figure}

It follows from $(1)$  that the orbit of $\Omega$ leaves every compact set and that the Euclidean diameter of $f^n(\Omega)$ is uniformly bounded for all $n\geq 0$, hence $\Omega$ is contained in the Fatou set.  By (4)  the origin is an attracting fixed point of $f$ whose basin of attraction is clearly disjoint from $\Omega$. Finally (3) implies that the pre-images of the origin accumulate everywhere on the boundary of $\Omega$, hence $\Omega$ is a Fatou component.

\begin{remark}\label{rem:1} According to the classification in \cite{BEFRS19} a simply connected wandering domain  $\Omega$ is \emph{eventually isometric} if and only if there are $z,z'\in \Omega$ for which the hyperbolic distance $\text{dist}_{\Omega_n}(f^n(z),f^n(z'))=c(z,z')>0$ for all sufficiently large $n$, where $\Omega_n$ is a Fatou component containing $f^n(\Omega)$ and  $c(z,z'):=\lim_{n\rightarrow \infty}\text{dist}_{\Omega_n}(f^n(z),f^n(z'))$. Note that if this is the case for the points $z,z'$ then the same holds of all pairs of points in $\Omega$ that are not pre-images of the same point. In our case the iterates of $f$ are univalent on $\Omega$ and $\Omega_n=f^n(\Omega)$, therefore $\text{dist}_{\Omega}(z,z')=\text{dist}_{\Omega_n}(f^n(z),f^n(z'))$ for all $n>0$. This shows that the wandering component in Theorem \ref{thm:esc} is eventually isometric.

 With a small adaptation of the proof we can also obtain any of the three possible types of behaviour of the orbits of points in terms of convergence to the boundary:
 \begin{itemize}
 \item   \emph{convergent} type, i.e. $\text{dist}_{\text{Eucl}}(f^n(z),\partial \Omega_n)\rightarrow 0$ for all $z\in \Omega$.
 \item  \emph{bungee} type, i.e. there are sequences $(n_k)$ and $(m_k)$ such that $\text{dist}_{\text{Eucl}}(f^{n_k}(z),\partial \Omega_{n_k})\rightarrow 0$ and 
 $\liminf_{k\rightarrow \infty}\text{dist}_{\text{Eucl}}(f^{m_k}(z),\partial \Omega_{m_k})>0$ for all $z\in \Omega$.
 \item all  orbits  \emph{stay away}  from  the boundary, i.e. $
\liminf_{n\rightarrow \infty}\text{dist}_{\text{Eucl}}(f^n(z),\partial \Omega_n)>0.
$ for all $z\in \Omega$.
\end{itemize}

The least work is needed to make sure that all orbits stay away from the boundary. Indeed, let $h(z)=z+4$ and observe that for every $n\geq0$ our function $f$ satisfies $\|h^n-f^n\|_{\overline\Omega}<\sum_{k=0}^n\sum_{j=k+1}^\infty\epsilon_j$. Now given a point $z'\in\Omega$ we can chose $\epsilon_j$'s sufficiently small so that   $\sum_{n=0}^\infty\sum_{j=n+1}^\infty\epsilon_j< \frac{1}{2}\text{dist}_{\text{Eucl}}(z',\partial\Omega)$.  This implies that
\begin{align*}\text{dist}_{\text{Eucl}}(f^n(z'),\partial \Omega_n)&\geq
\text{dist}_{\text{Eucl}}(h^n(z'),h^n(\partial\Omega)) -|f^n(z')-h^n(z')|-\sup_{w\in\partial \Omega}|f^n(w)-h^n(w)|\\
&\geq\text{dist}_{\text{Eucl}}(z',\partial\Omega)-2\|h^n-f^n\|_{\overline\Omega}>0,
\end{align*}
for all $n\geq0$. Since the above inequality holds for the point $z'\in\Omega$ it follows that it holds for all points in $\Omega$, see \cite[Theorem 4.2]{BEFRS19}, hence this concludes the argument. 

Finally let us mention that in order to achieve the convergent type and the bungee type one only needs to make an appropriate choice of a linear map $h_3$ in every step of the induction.
\end{remark}

\section{Proof of Theorem 2}

As before we start the proof by choosing a sequence $(U_n)_{n\geq0}$ of compact neighbourhoods of $\overline{\Omega}$ that  satisfies:  
\begin{enumerate}[label=(\roman*)]
\item $\mathbb{C}\backslash U_n$ is connected for all $n \geq 0$,
\item  $U_{n+1}\subset \text{int}(U_n)$ for all $n \geq 0$,
\item $\overline{\Omega} =\bigcap_{n\geq 0}U_n$.
\end{enumerate}

Next we choose a sequence $(x_n)\in\mathbb{C}\backslash \overline{\Omega}$ that satisfies the following properties:
\begin{enumerate}
\item $x_n\in \text{int}(U_{n-1})\backslash U_n$  for all $n \geq 1$,
\item $\omega((x_n))=\partial\Omega$, i.e. $(x_n)$ accumulates everywhere on the boundary of $\Omega$.
\end{enumerate}

We assume, without the loss of generality, that $\Omega\subset U_0\Subset \Delta(\frac{2}{3},\frac{1}{9})$. Note that this can be achieved with a linear change of coordinates.

%
%
%Let $(x_n)\in \Delta(\frac{2}{3},\frac{1}{9})\backslash  \overline{\Omega}$ be a sequence of points satisfying the following properties:
%\begin{enumerate}
%\item $d(x_n,b\Omega)> d(x_{n+1},b\Omega)$ for all $n\geq 1$,
%\item $\omega((x_n))=b\Omega$, i.e. $(x_n)$ accumulates on the boundary of $\Omega$.
%\end{enumerate}
%Furthermore let $(U_n)_{n\geq0}$ be a sequence of compact polynomially convex neighbourhoods of $\overline{\Omega}$ satisfying:  
%\begin{enumerate}[label=(\roman*)]
%\item $U_0\subset\Delta(\frac{2}{3},\frac{1}{9})$
%\item  $U_{n+1}\subset \text{int}(U_n)$ for all $n \geq 0$,
%\item $x_j\in U_n$  if and only if $j>n\geq 0$.
%\end{enumerate}

We will obtain or function $f$ by taking a limit of an inductively constructed sequence of entire functions given by the following lemma.

%
%\begin{lemma}
%There exists a sequence $(f_k)_{k\geq 0}$ of entire function, sequence  $(x^j_n)_{j\geq 1,  n \geq 0 }$, point  $s_0$, a sequence positive real numbers   $(R_k)_{k\geq 0}\nearrow \infty$ and  strictly increasing sequence of integers  $(N_k)_{k\geq 0}$ satisfying  $N_0=0$ such that the following properties are satisfied:
%
%\begin{enumerate}[label=(\alph*)]
%\item   $\|f_{k+1}-f_{k}\|_{\Delta(0,R_{k})}\leq 2^{-k}$ for all $k\geq 0$,
%\item $f_{k}^{N_k}(\Omega)\subset \Delta(2k,\frac{1}{2})$ for all $k\geq 1$,
%\item $f_{k}^{N_k+1}(\Omega)\subset \Delta(0,\frac{1}{2k+3})$ for all $k\geq 1$,
%\item $f_{k}^j(\Omega)\subset \bigcup_{i\geq0 }  \Delta(2i,\frac{1}{2})$, for all $1\leq j\leq N_{k}$ ,
%\item $f_{k}^j|_\Omega$ is univalent for all $1\leq j\leq N_k$,
%\item $f_k(x_\ell^j)=x_{\ell+1}^j$ for all  $0\leq \ell< N_k$, all $1\leq j\leq k$,
%\item $f_k(x_{N_{j-1}}^j)=s_0$ for all $1\leq j\leq k$,
%\item $f_k(s_0)=s_0$ and $f'_k(s_0)=\frac{1}{2}$  for all $ k\geq 1$.
%\end{enumerate}
%\end{lemma}

\begin{lemma}\label{lem:2}  Let  $(U_n)_{n\geq 0}$ and $(x_n)_{n\geq 1}$ be as above. Let $A_k:=\Delta(0,\frac{1}{2k+1})\backslash \overline{\Delta}(0,\frac{1}{2k+3})$ and let $N_k:=\frac{k(k+1)}{2}$. There exists a sequence $(f_k)_{k\geq 1}$ of entire function and a sequence  $(x^j_n)_{n,j\geq 1 }$ of points such that the following properties are satisfied for all $k\geq 1$:

\begin{enumerate}[label=(\alph*)]
\item   $\|f_{k+1}-f_{k}\|_{\overline{\Delta}(0,4k-2)}\leq 2^{-k}$,
\item $f_{k}^{N_k}(U_k)\subset A_k$
\item $f_{k}^{n}(\overline{\Delta}(0,\frac{1}{2k+1}))\subset \Delta(4n,1)$ for all $1\leq n\leq k$
%\item $f_{k}^n(U_k)\subset \bigcup_{j\geq0 }  \Delta(4j,1)$, for all $1\leq n\leq N_{k}+k$ ,
\item $f_{k}^n|_{\overline{\Delta}(0,\frac{1}{2k+1})}$ is univalent for all $1\leq n\leq k$,
\item $f_{k}^n|_{U_k}$ is univalent for all $1\leq n\leq N_k+k$,
\item $f_k^j(x_n)=x_n^j$ for all  $1\leq j< N_n$, all $1\leq n\leq k$,
\item $f_k^{N_{n}}(x_n)=1$ for all $1\leq n\leq k$,
\item $f_k(1)=1$ and $f'_k(1)=\frac{1}{2}$.
\end{enumerate}
\end{lemma}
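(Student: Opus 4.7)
The proof will proceed by induction on $k$, mirroring the argument of Lemma~\ref{propescape} but invoking Theorem~\ref{approx} at each step with a larger system of disjoint compacta. For the base case $k=1$ I apply Theorem~\ref{approx} to the four pairwise disjoint sets $U_1$, $\overline{\Delta}(0,\tfrac{1}{3})$, $\{x_1\}$, $\{1\}$; these are pairwise disjoint with connected complements because $U_1 \subset \Delta(\tfrac{2}{3},\tfrac{1}{9})$. The targets are a linear map of small slope with image inside $A_1$ on $U_1$, the translation $z\mapsto z+4$ on $\overline{\Delta}(0,\tfrac{1}{3})$ (so that its image lies in $\Delta(4,1)$), and the values $1$ at $x_1$ and $1$ (with derivative $\tfrac{1}{2}$) at the fixed point $1$. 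All targets are univalent where needed, so for sufficiently small approximation error the remaining conditions (b)--(e) at level $1$ follow.

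For the inductive step I assume $f_k$ and the $x_n^j$'s ($n\le k$, $j<N_n$) have been constructed. I first extend the sequence by setting $x_{k+1}^j:=f_k^j(x_{k+1})$ for $1\le j\le N_k+k$; this is well defined because $x_{k+1}\in U_k$ and $f_k^j|_{U_k}$ is univalent for all such $j$ by (e). I then build $f_{k+1}$ via Theorem~\ref{approx} on four disjoint compacta: $K_1:=\overline{\Delta}(0,4k-2)$ with target $h_1:=f_k$; a compact neighborhood $K_2\subset\Delta(4k,1)$ of $f_k^{N_k+k}(U_{k+1})$ with connected complement and target a linear univalent map $h_2$ whose image lies inside $A_{k+1}$; a compact neighborhood $K_3\subset\Delta(4k,1)$ of $f_k^k(\overline{\Delta}(0,\tfrac{1}{2k+3}))$ with connected complement and target $h_3(z):=z+4$; and the singleton $K_4:=\{x_{k+1}^{N_k+k}\}$ with target $h_4\equiv 1$. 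Interpolation at $1$ (value $1$, derivative $\tfrac{1}{2}$) and at all previous orbit points $x_n^j$ lying in $K_1$ is automatic from $h_1=f_k$; and since for $n\le k$ the orbit of $x_n$ under $f_k$ ends at $1$ after $N_n\le N_k$ iterations and passes only through $\overline{\Delta}(0,1)$ and $\bigcup_{j\le k-1}\Delta(4j,1)\subset K_1$, all those points indeed lie in $K_1$.

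The disjointness of $K_1,\ldots,K_4$ is the geometric heart of the argument. Since $\Delta(4k,1)$ is disjoint from $\overline{\Delta}(0,4k-2)$, $K_1$ is separated from the others. Within $\Delta(4k,1)$, the decisive observation is that $f_k^{N_k}(U_{k+1})\subset A_k$ is disjoint from $\overline{\Delta}(0,\tfrac{1}{2k+3})$ by the very definition of the annulus $A_k$; applying the univalence of $f_k^k$ on $\overline{\Delta}(0,\tfrac{1}{2k+1})$ from hypothesis (d), the images $f_k^{N_k+k}(U_{k+1})$ and $f_k^k(\overline{\Delta}(0,\tfrac{1}{2k+3}))$ remain disjoint, and choosing sufficiently small compact connected-complement neighborhoods yields $K_2$ and $K_3$. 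The same univalence together with $x_{k+1}\notin U_{k+1}$ and $f_k^{N_k}(x_{k+1})\in A_k$ (hence outside $\overline{\Delta}(0,\tfrac{1}{2k+3})$) shows that $x_{k+1}^{N_k+k}$ lies outside $K_2\cup K_3$, so $K_4$ can be kept disjoint from them.

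Finally, with $\epsilon_{k+1}\le 2^{-k}$ chosen small enough, the iteration properties (b)--(e) at level $k+1$ follow by continuity. Since $f_{k+1}$ is $\epsilon_{k+1}$-close to $f_k$ on $K_1$, the iterate $f_{k+1}^{N_k+k}(U_{k+1})$ lies close to $f_k^{N_k+k}(U_{k+1})\subset\text{int}(K_2)$ and hence inside $K_2$; then $f_{k+1}^{N_{k+1}}(U_{k+1})=f_{k+1}(f_{k+1}^{N_k+k}(U_{k+1}))$ is close to $h_2(K_2)\subset A_{k+1}$, yielding (b). The analogous chain for $\overline{\Delta}(0,\tfrac{1}{2k+3})$ using $K_3$ and $h_3$ gives the case $n=k+1$ of (c), while the earlier cases follow directly from $f_{k+1}\approx f_k$ on $K_1$. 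Univalence (d)--(e) is preserved by Hurwitz's theorem, since the targets $f_k$, $h_2$ and $h_3$ are all univalent where required. The main obstacle I anticipate is the geometric one of the previous paragraph: arranging $K_2$, $K_3$, and $K_4$ to be pairwise disjoint inside the single small disk $\Delta(4k,1)$, while simultaneously imposing the conflicting interpolation condition $f_{k+1}(x_{k+1}^{N_k+k})=1$ together with the $A_{k+1}$-valued behaviour on $K_2$ and the translation-like behaviour on $K_3$. This is precisely what the disjointness of $A_k$ from its closed inner disk $\overline{\Delta}(0,\tfrac{1}{2k+3})$ provides, once propagated forward $k$ steps by the univalent iteration of $f_k$.
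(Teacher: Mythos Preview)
Your proposal is correct and follows essentially the same route as the paper's proof: the same four compacta (up to relabelling), the same target maps, and the same use of Theorem~\ref{approx} at each inductive step; your identification of the key geometric point --- that $A_k$ is disjoint from $\overline{\Delta}(0,\tfrac{1}{2k+3})$, and that this disjointness is propagated forward by the univalence of $f_k^k$ on $\overline{\Delta}(0,\tfrac{1}{2k+1})$ --- is exactly what the paper uses. One small wording issue: when you say interpolation at the previous orbit points is ``automatic from $h_1=f_k$'', be sure you mean that these finitely many points are included in the interpolation set $L_1\subset K_1$ of Theorem~\ref{approx} (as the paper does explicitly), not merely that $h_1$ already takes the right values there; and you should also list the points $x_{k+1}^j$ for $0\le j<N_k+k$ among them, since condition~(f) at level $k+1$ requires exact equality along that orbit as well.
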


%\begin{remark} Conditions $(f)-(h)$ of the Lemma imply that $x^k_{j}=s_0$ for all $j>N_{k-1}$ and $k\geq 1$.
%\end{remark}

\begin{proof}[Proof of the Lemma:]
Observe that $\Omega \Subset U_0\subset  A_0$. We first construct a function $f_1$ that satisfies:
\begin{enumerate}
\item $f_{1}(U_1)\subset A_1$
\item $f_{1}(\overline\Delta(0,\frac{1}{3}))\subset \Delta(4,1)$
\item $f_{1}|_{\overline\Delta(0,\frac{1}{3})}$ is univalent,
\item $f_{1}^n|_{U_1}$ is univalent for all $1\leq n\leq 2$,
\item $f_1(x_1)=1$ 
\item $f_1(1)=1$ and $f'_1(1)=\frac{1}{2}$.
\end{enumerate}

Define compact sets $K_1=\{1\}$, $K_2=\{x_1\}$, $K_3= \overline{\Delta}(0,\frac{1}{3})$ and $K_4=U_1$ and observe that they are disjoint and their complement is connected. 
Let $h_1(z)=\frac{1}{2}z+\frac{1}{2}$, $h_2(z)=1$, $h_3(z)=z+4$ and let $h_4$ be a non-constant linear map that maps $h_4(K_1)\subset A_1$.  By Theorem \ref{approx}, for every $\epsilon_1 >0$ there exists an entire function $f_1$ that satisfies:
\begin{itemize}
\item $\|f_1-h_j\|_{K_j}\leq\epsilon_1$ for all $1\leq j\leq 4$
\item $f_1(x_1)=1$
\item $f_1(1)=1$ and $f'_1(1)=\frac{1}{2}$.
\end{itemize}  
Clearly (1) and (2) are satisfied as long as $\epsilon_1$ is sufficiently small. For (3) and (4) note that   $h_4$ is univalent on $U_1$  and $h_3$ is univalent on $\overline\Delta(0,\frac{1}{3})$ and that $h_4(U_1)\subset A_1\subset\overline\Delta(0,\frac{1}{3})$ , hence for $\epsilon_1$ sufficiently small the function $f_1$ is univalent on $U_1\cup\overline\Delta(0,\frac{1}{3})$ and $f_1^2$ is univalent on $U_1$. To conclude the first step we define  $x_2^j=f_1^j(x_2)$ for $1\leq j<3=N_2$.  Clearly $f_1$ satisfies all the conditions of the lemma for $k=1$.

\medskip

Now let us assume that we have already constructed entire functions $f_1,\ldots ,f_k$ and points $x_n^j$,  where $1\leq j<N_n$ and $1\leq n\leq k+1$,  that satisfy all the conditions $(a)-(h)$. Note that $N_{k+1}=N_k+k+1$. We proceed with the constructions satisfying the conditions for $k + 1$.
\medskip

Let us define compact sets  $ K_1:=\overline{\Delta}(0,4k-2)$, $ K_2:=\{x^{N_k+k}_{k+1}\}$.
By the induction hypothesis we have:
\begin{itemize}
\item $f_k^{N_k}(U_k)\subset \Delta(0,\frac{1}{2k+1})\backslash \overline{\Delta}(0,\frac{1}{2k+3})\subset \overline{\Delta}\left(0, \frac{1}{2k+1}\right)$,
\item $f^n_k|_{\overline{\Delta}\left(0, \frac{1}{2k+1}\right)}$ is univalent for all $1\leq n\leq k$,
\item $f^n_k\left(\overline{\Delta}\left(0, \frac{1}{2k+1}\right)\right)\subset \Delta(4n,1) $ for all $1\leq n\leq k$.
\end{itemize}   
This implies that compact sets $K_1$, $K_2$, $f_k^{k}\left(\overline{\Delta}\left(0, \frac{1}{2{k+3}}\right)\right)$ and $f^{N_k+k}_k(U_{k+1})$ are pairwise disjoint and have connected complement, hence there are compact sets $K_3$ and $K_4$ that satisfy:
\begin{itemize}
\item  $ f_k^{k}\left(\overline{\Delta}\left(0, \frac{1}{2{k+3}}\right)\right)\subset \text{int}K_3$,
\item  $ f^{N_k+k}_k(U_{k+1})\subset \text{int}K_4$,
\item $K_1$, $K_2$, $K_3$ and $K_4$ are pairwise disjoint and  have connected complement.
\end{itemize}

Now we define functions
 $$
 h_1(z):=f_k(z),\quad h_2(z):= 1, \quad h_3(z):= z+4,
 $$
and  let $h_4$ be a non-constant linear map that satisfies  $h_4(K_4)\subset A_{k+1}.$
 
 \medskip 
 
By Theorem \ref{approx}, for every $\epsilon_{k+1}>0$ there exists an entire function $f_{k+1}$, such that: 
\begin{enumerate}
\item $\|f_{k+1}-h_j\|_{K_j}\leq \epsilon_{k+1}$ for every $1\leq j\leq 4$, 
\item $f_{k+1}(x_n^j)=f_k(x_n^j)$ for all $0 \leq j< N_n$ and  $1\leq n \leq k$, (here $x_n^0:=x_n$)
\item $f_{k+1}(x_{k+1}^j)=f_k(x_{k+1}^j)$ for all $0 \leq j< N_k+k$, (here $x_{k+1}^0:=x_{k+1}$) 
\item $f_{k+1}(x^{N_k+k}_{k+1})=1$,
\item $f_{k+1}(1)=1$ and $f_{k+1}'(1)=\frac{1}{2}$,
\end{enumerate}

where we can choose  $\epsilon_{k+1}\leq \frac{1}{2^k}$ small enough such that:
\begin{enumerate}[label=(\roman*)]
\item $f_{k+1}^{N_{k+1}}(U_{k+1})\subset A_{k+1}$
\item $f_{k+1}^{n}(\overline{\Delta}(0,\frac{1}{2k+3}))\subset \Delta(4n,1)$ for all $1\leq n\leq k+1$
\item $f_{k+1}^n|_{\overline{\Delta}(0,\frac{1}{2k+3})}$ is univalent for all $1\leq n\leq k+1$,
\item $f_{k+1}^n|_{U_{k+1}}$ is univalent for all $1\leq n\leq N_{k+1}+k+1$,

\end{enumerate}

\begin{figure}[t]
\label{fig:oscillation}
\includegraphics[width=6in]{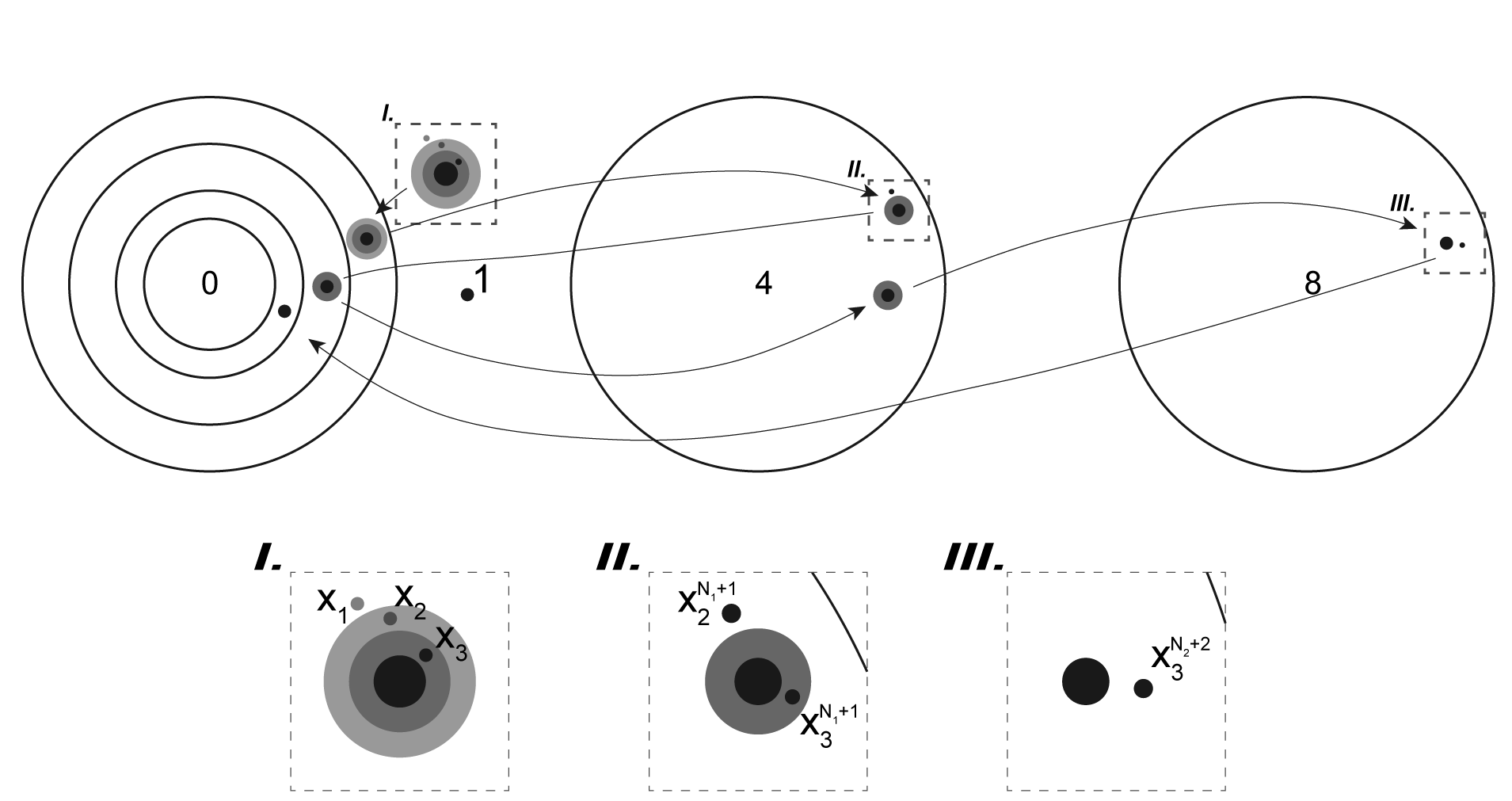}
\caption{The figure shows the first six iterates of $f$ acting on the wandering domain, which lies inside the black disk. There are three points around this disk which are part of the sequence of points that accumulates everywhere on the boundary of $\Omega$. Points $x_1$, $x_2^{N_1+1}$ and $x_3^{N_2+2}$, that belong to the orbit of points $x_1$, $x_2$ and $x_3$ respectively, and are mapped to the attracting fixed point $1$.}
\end{figure}

Indeed, for (i) recall that $N_{k+1}=N_k+ k+1$, hence for small $\epsilon_{k+1}$ we have 
$$f_{k+1}^{N_{k+1}}(U_{k+1})=f_{k+1}(f_{k+1}^{N_k+k}(U_{k+1}))\subset f_{k+1}(K_4)\subset A_{k+1}.$$

For (ii) and (iii) recall that by the induction hypothesis $f_{k}^n|_{\overline{\Delta}(0,\frac{1}{2k+1})}$ is univalent and $f_{k}^{n}(\overline{\Delta}(0,\frac{1}{2k+1}))\subset \Delta(4n,1)$  for all $1\leq n\leq k$. Since $f_{k}^{n}(\overline{\Delta}(0,\frac{1}{2k+1}))\subset K_1$ for $0\leq n< k$ and since (1) holds, we can choose $\epsilon_{k+1}$ so small that $f_{k+1}^{n}(\overline{\Delta}(0,\frac{1}{2k+3}))\subset \Delta(4n,1)$ and  that $f_{k+1}^n|_{\overline{\Delta}(0,\frac{1}{2k+3})}$ is univalent for all $1\leq n\leq k$. By taking even smaller $\epsilon_{k+1}$ if necessary we may assume that
$f_{k+1}^{k}(\overline{\Delta}(0,\frac{1}{2k+3}))\subset K_3$, and since  $h_3(z)=z+4$ is univalent on $K_3$ it follows that $f_{k+1}^{k+1}(\overline{\Delta}(0,\frac{1}{2k+3}))\subset \Delta(4(k+1),1)$ and that $f_{k+1}^{k+1}|_{\overline{\Delta}(0,\frac{1}{2k+3})}$ is univalent for all sufficiently small $\epsilon_{k+1}$.

For (iv) recall that by the induction hypothesis $f_{k}^n|_{U_{k}}$ is univalent for all $1\leq n\leq N_{k}+k$. Since $f_{k}^n(U_{k})\subset K_1$ for all  $0\leq n< N_{k}+k$ and since (1) holds, we can choose $\epsilon_{k+1}$ so small that $f_{k+1}^n|_{U_{k+1}}$ is univalent for all $1\leq n\leq N_{k}+k$. Next observe that by taking even smaller $\epsilon_{k+1}$ if necessary we can assume that $f_{k+1}^{N_{k}+k}(U_{k+1})\subset K_4$ and since $h_4$ is univalent on $K_4$ and $h_4(K_4)\subset A_{k+1}\subset\Delta(0,\frac{1}{2k+3})$, it follows that  $f_{k+1}^{N_{k+1}}|_{U_{k+1}}$ is univalent for sufficiently small $\epsilon_{k+1}$ (recall that $N_{k+1}=N_{k}+k+1$). Finally since $f_{k+1}^{N_{k+1}}(U_{k+1})\subset\Delta(0,\frac{1}{2k+3})$ and since (iii) holds we can conclude that $f_{k+1}^n|_{U_{k+1}}$ is univalent for all $1\leq n\leq N_{k+1}+k+1$.

\medskip

Finally we define $x_{k+2}^j:=f_{k+1}^j(x_{k+2})$ for all $1\leq j <N_{k+2}$. The entire function $f_{k+1}$ now satisfies all the conditions $(a)-(h)$ of the Lemma, hence this completes the inductive step. 
\end{proof}

Let us continue with the proof of Theorem 2. Let $N_n=\frac{n(n+1)}{2}$ and let $f$ be the limit of the sequence of entire functions $(f_n)$ given by the lemma above. Observe that for every $n\geq 1$ we have:

%\begin{enumerate}
%\item $f^{N_k}(\Omega)\subset \Delta(2k,\frac{1}{2})$ for all $k\geq 1$,
%\item $f^{N_k+1}(\Omega)\subset \Delta(0,\frac{1}{2k+3})$ for all $k\geq 0$, 
%\item $f^k(\Omega)\subset \bigcup_{i\geq0 } \Delta(2i,\frac{1}{2})$, for all $k\geq 1$,
%\item $f^k|_{\Omega}$ is univalent for all $k\geq 1$,
%\item $f^{N_{k}+1}(x_{k+1})=s_0$ as for all $k\geq 0$,
%\item $f(s_0)=s_0$ and $f'(s_0)=\frac{1}{2}$ 
%\item $f^k(0)=2k$ for all $k\geq 1$ 
%\end{enumerate}
%

\begin{enumerate}
\item $f^{N_n}(\Omega )\subset \Delta(0,\frac{1}{2n+1})$, 
\item $f^{N_n+n}(\Omega)\subset \Delta(4n,1)$,
\item $f^n(\Omega)\subset \bigcup_{j\geq0 } \Delta(4j,1)$, 
\item $f^n|_{\Omega}$ is univalent,
\item $f^{N_{n}}(x_n)=1$, 
\item $f(1)=1$ and $f'(1)=\frac{1}{2}$.
\end{enumerate}

These properties clearly imply that $\Omega$ is contained in the Fatou set. Since the pre-images of the attracting fixed point $1$ accumulate everywhere on the boundary of $\Omega$ it follows that  $\Omega$ is a Fatou component. Finally properties $(1)$ and $(2)$ imply that $\Omega$ is an oscillating wandering domain.
\medskip

\begin{remark} Using the same argument as in Remark \ref{rem:1} we can conclude that the oscillating wandering domain from Theorem 2 is  \emph{eventually isometric}. Moreover it follows from the proof of Lemma \ref{lem:2} that the diameter of $f^n(\overline{\Omega})$ converges to $0$ as $n$ goes to infinity, hence  $\text{dist}_{\text{Eucl}}(f^n(z),\partial \Omega_n)\rightarrow 0$ for all $z\in \Omega$, e.g. all orbits of points converge towards the boundary. With a small adaptation of the proof, namely by choosing different linear map $h_3$ in every step of induction, we could also obtain a \emph{bungee} type behaviour in terms of the convergence of the orbits of points towards the boundary. 
\end{remark}

\section{Appendix}

Although the result of Theorem \ref{approx} is well known by many specialists in the theory of holomorphic approximation and could be attributed to several authors, e.g. \cite{BS49,F48,Roy67}, we were not able to find any reference where the result would be stated in such a form.  Note that a very similar approximation result was given by Eremenko-Lyubich, see \cite[Main Lemma]{EL87}.  For more information about the theory of holomorphic approximation we refer the interested reader to the recent survey \cite{FFW20}.

The proof of Theorem \ref{approx} that is presented bellow is a slight modification of the proof of the Main Lemma given Eremenko and Lyubich which relies on following lemma. 

\begin{lemma}\label{dense} Let $A$ be  a locally convex linear  topological space,  let $V$ be a domain  in $A$,  let $W$ be a convex  dense  subset in $V$ and let $S$ be an affine  subspace  of $A$  of  finite codimension,  such that $S\cap V \neq \emptyset$.   Then $S\cap W$ is dense  in  $S\cap V$.
\end{lemma}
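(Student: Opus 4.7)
The plan is to prove the lemma by induction on the codimension $n \geq 0$ of $S$ in $A$. The base case $n = 0$ is immediate since $S = A$ gives $S \cap W = W$, which is dense in $V = S \cap V$ by hypothesis. The substance of the argument lies in the codimension-one case; higher codimension then follows by a routine reduction inside a hyperplane.

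For the codimension-one case, fix $x \in S \cap V$ and an arbitrary open neighborhood $N$ of $x$ with $N \subset V$. By local convexity of $A$, shrink $N$ to a convex open neighborhood $U$ of $x$. Write $S = \{y : \lambda(y) = c\}$ for a continuous linear functional $\lambda$ and $c = \lambda(x)$, which is legitimate since $S$ is a closed affine hyperplane (the application to Theorem \ref{approx} involves only $S$ cut out by continuous functionals). Pick $p \in A$ with $\lambda(p) = 1$ and $\epsilon > 0$ so small that $y_\pm := x \pm \epsilon p$ both lie in $U$; then $\lambda(y_\pm) = c \pm \epsilon$. By continuity of $\lambda$ and density of $W$ in $V \supset U$, one can find $w_\pm \in W \cap U$ with $\lambda(w_+) > c$ and $\lambda(w_-) < c$. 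The segment $\{(1-t)w_- + tw_+ : t \in [0,1]\}$ lies in $W$ by convexity of $W$ and in $U$ by convexity of $U$, and the intermediate value theorem applied to the continuous function $t \mapsto \lambda((1-t)w_- + tw_+)$ produces a parameter $t^*$ for which the corresponding point $w$ satisfies $\lambda(w) = c$. This $w$ lies in $W \cap S \cap U \subset W \cap S \cap N$, proving density.

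For the inductive step with $n \geq 2$, choose a closed hyperplane $H \supset S$ (possible by picking one of the continuous functionals defining $S$). The codimension-one case applied to $H$ in $A$ yields that $H \cap W$ is dense in $H \cap V$. Now $H$, equipped with the subspace topology, is itself a locally convex linear topological space in which $H \cap V$ is open, $H \cap W$ is convex, and $S$ is an affine subspace of codimension $n - 1$ meeting $H \cap V$ (since $S \cap V \neq \emptyset$ and $S \subset H$). The inductive hypothesis applied within $H$ then gives $S \cap W = S \cap (H \cap W)$ dense in $S \cap V = S \cap (H \cap V)$. The main obstacle is the codimension-one step, where convexity must be exploited \emph{twice}—in $U$ to keep the constructed segment near $x$, and in $W$ to ensure the interpolated point lies in $W$—and combined with density and with the continuity of $\lambda$ so as to land a point of $W$ exactly on $S$ and inside a prescribed neighborhood; the implicit closedness of $S$ is the structural hypothesis on which the whole argument hinges.
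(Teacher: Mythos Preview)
The paper does not give its own proof of this lemma; it simply refers the reader to Eremenko--Lyubich \cite{EL87}. Your induction on the codimension---with the codimension-one step handled by finding two points of $W$ in a small convex neighborhood lying on opposite sides of the hyperplane and then interpolating along the connecting segment---is correct and is in fact the standard argument. You also correctly flag the one implicit hypothesis: writing $S=\{\lambda=c\}$ with $\lambda$ continuous amounts to assuming $S$ is closed, which is satisfied in the application to Theorem~\ref{approx}, where $S$ is cut out by finitely many continuous point-evaluation functionals on $\mathcal{O}(U)$. One small remark on the inductive step: when you pass to $H$, the set $H\cap V$ need not be connected even if $V$ is, but this is harmless since your codimension-one argument never uses connectedness of $V$, so ``domain'' may be read as ``nonempty open set'' throughout.
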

For the proof of this lemma we refer the reader to \cite[p.420]{EL87}.

\begin{proof}[Proof of Theorem \ref{approx}]
Let $U_j$ be a simply connected open neighbourhood of $K_j$, such that  $f_j$ is holomorphic on $U_j$ and $U_j\cap K_i=\emptyset$ for all $j\neq i$. Consider the space $\mathcal{O}(U)$ of all holomorphic functions on $U:=\cup_{j=1}^nU_j$ equipped with the topology of uniform convergence on compact sets and let $F\in\mathcal{O}(U)$ be a function that satisfies $F|_{U_j}\equiv f_j$ for all $1\leq j\leq n $.  We define
$$
V:=\left\{g\in \mathcal{O}(U)\mid  \|g-f_j\|_{K_j}<\epsilon \text{ for all } 1\leq j\leq n   \right\}
$$
which is a convex set in $\mathcal{O}(U)$, and let be $W$ be the subset of $V$ consisting of polynomials. By Runge's theorem $W$ is dense in $V$. Moreover $W$ is also convex. Now consider an affine subspace of $\mathcal{O}(U)$ given by
$$
S:=\{g\in \mathcal{O}(U) \mid g(z)=f_j(z), g'(z)=f_j'(z) \text{ for all } z\in L_j   \text{ and all }    1\leq j\leq n\}.
$$
Since by the assumption all $L_j$ are finite sets it follows that $S$ is an affine subspace of $\mathcal{O}(U)$ of finite codimension. Finally since $F\in V\cap S$ it follows by Lemma \ref{dense} that $W\cap S$ is dense in $V\cap S$, hence there exists a polynomial $f$ such that for every $1\leq j\leq n$ we have
$$\|f-f_j\|_{K_j}<\epsilon, $$
and 
$$f(z)=f_j(z),\quad  f'(z)=f_j'(z) \quad \text{ for all } z\in L_j. $$

\end{proof}


\begin{thebibliography}{9999}
 \bibitem{Bar75} I. N. Baker, \emph{The domains of normality of an entire function.} Ann. Acad. Sci. Fenn. Math., 1 (1975), 277–283.

\bibitem{Bar76}I. N. Baker, \emph{An entire function which has wandering domains}. J. Austral. Math. Soc. Ser. A, 22(2):173-176, 1976

\bibitem{Bar84}I. N. Baker, \emph{Wandering domains in the iteration of entire functions.} Proc. London Math. Soc. (3), 49(3):563-576, 1984.

\bibitem{BF} A. M. Benini, N. Fagella, \emph{Singular values and bounded Siegel disks.} Math. Proc. Camb. Phil. Soc. (2018), 165, 249–265


\bibitem{BS49} H. Behnke, K. Stein, \emph{Entwicklung analytischer Funktionen auf Riemannschen Fl\"{a}chen.}Math. Ann. (120), 430--461, 1949.



\bibitem{BRS13} W. Bergweiler, P. J. Rippon, G. M. Stallard, \emph{Multiply connected wandering domains of entire functions.} Proc. Lond. Math. Soc. (3),107(6):1261-1301, 2013.

\bibitem{BEFRS19} A. M. Benini, V. Evdoridou, N. Fagella, P. Rippon, G. Stallard. \emph{Classifying simply connected wandering domains.} Preprint, arXiv:1910.04802, 2019

\bibitem{Bis15}C. J. Bishop. \emph{Constructing entire functions by quasiconformal folding}. Acta Math., 214(1):1-60, 2015.


\bibitem{BT21} L. Boc Thaler, \emph{Automorphisms of $\mathbb{C}^m$ with bounded wandering domains.}, Annali di Matematica (2021). https://doi.org/10.1007/s10231-020-01057-3

\bibitem{CR} A. Ch\'eritat, P. Roesch, \emph{Herman’s Condition and Siegel Disks of Bi-Critical
Polynomials.} Commun. Math. Phys. 344, 397–426 (2016)


\bibitem{EL87} A. E. Eremenko, M. Ju. Ljubich. \emph{Examples of entire functions with pathological dynamics.} J. London Math. Soc. (2), 36(3):458-468, 1987.

\bibitem{ERS20} V. Evdoridou, P. Rippon, G. Stallard, \emph{Oscillating simply connected wandering domains.} Preprint, arXiv:2011.14736, 2020


\bibitem{Fat20} P. Fatou,  \emph{Sur les  \'equations fonctionnelles.}, Bull. Soc. Math. France, 48:208–314, 1920.

\bibitem{FJL19} N.  Fagella,  X. Jarque, K.  Lazebnik, \emph{Univalent  wandering domains in the Eremenko-Lyubich class.} J. Anal. Math. 139, 369-395, 2019

\bibitem{FFW20}J. E. Forn\ae ss, F. Forstneri\v c, E. Wold, \emph{Holomorphic approximation:  the legacy of Weierstrass,  Runge,  Oka-Weil,  and  Mergelyan.}, Advancements in Complex Analysis, pp. 133–192. Springer, Cham, 2020

\bibitem{F48} H. Florack  \emph{Regul\"{a}re und meromorphe {F}unktionen auf nicht geschlossenen {R}iemannschen {F}l\"{a}chen.}, JSchr. Math. Inst. Univ. M\"{u}nster, 1, 1948


\bibitem{Her84}M. R. Herman, \emph{Exemples de fractions rationnelles ayant une orbite dense sur la sph\`ere de Riemann.} Bull. Soc. Math. France, 112(1):93-142, 1984.

\bibitem{KS} M. Kisaka and M. Shishikura, \emph{On multiply connected wandering domains of entire functions}, Transcendental dynamics and complex analysis, London Math. Soc. Lecture Note Ser. (348), Cambridge Univ. Press, Cambridge 2008, 217--250

\bibitem{MPS20}D. Marti-Pete, M. Shishikura. \emph{ Wandering domains for entire functions of finite order in the Eremenko–Lyubich class.} Proc. Lond. Math. Soc. (3),120(2):155-191, 2020.

\bibitem{Mil06}J. Milnor, \emph{Dynamics in one complex variable.} Third edition. Annals of Mathematics Studies, 160. Princeton University Press, Princeton, NJ, 2006


\bibitem{Roy67}H.L. Royden, \emph{Function theory on compact {R}iemann surfaces.} J. Analyse Math., 18:295–327, 1967.

\bibitem{Sch10}D. Schleicher, \emph{Dynamics of entire functions.} Holomorphic dynamical systems, 295–339, Lecture Notes in Math., 1998, Springer, Berlin, 2010.



\bibitem{Sul85}D. Sullivan. \emph{Quasiconformal homeomorphisms and dynamics. I. Solution of the Fatou-Julia problem on wandering domains.} Ann. of Math. (2), 122(3):401-418, 1985

\end{thebibliography}
\end{document}